\definecolor{gray}{rgb}{0.2,0.2,.2}
\newtheorem{theorem}{Theorem}
\theoremstyle{plain}
\newtheorem{lemma}{Lemma}
\newtheorem{proposition}{Proposition}
\newtheorem{remark}{Remark}
\numberwithin{equation}{section}
\newcommand{\chain}[1]{\begin{center} \ensuremath{\{#1\}} \end{center}}
\newcommand{\bb}[1]{\ensuremath{\mathbb{#1}}}
\newcommand{\bprod}[2]{\ensuremath{\prod_{#1}^{#2}}}
\newcommand{\bsum}[2]{\ensuremath{\sum_{#1}^{#2}}}
\begin{document}
\title[Nonexistence of OPN]{More on the Nonexistence of Odd Perfect Numbers of Certain Forms}
\author{Patrick A. Brown}
\address{Dr. Brown is not currently affiliated with any institution. Feel free to \newline
\indent email him regarding any questions on the data prepared for this paper.}
\email{PatrickBrown496@gmail.com}
\urladdr{}

\thanks{Thanks go to Kevin G. Hare for endorsing me so that this paper could appear on ArXive.org. His endorsement goes only so far as to admit that this work might look like real mathematics. Any mistakes are the sole responsibility of the author.}

\date{November 2015}
\subjclass[2010]{11-04, 11A25, 11B83, 11Y55}
\keywords{Odd Perfect Numbers}
\dedicatory{}

\begin{abstract}
Euler showed that if an odd perfect number exists, it must be of the form $N = p^\alpha q_{1}^{2\beta_{1}} \ldots q_{k}^{2\beta_{k}}$, where $p, q_{1}, \ldots, q_k$ are distinct odd primes, $\alpha$, $\beta_{1}$, \ldots , $\beta_{k} \in \bb{N}$, with $p \equiv \alpha \equiv 1 \pmod{4}$. In 2005, Evans and Pearlman showed that $N$ is not perfect, if $3|N$ or $7|N$ and each $\beta_{i} \equiv 2 \pmod{5}$. We improve on this result by removing the hypothesis that $3|N$ or $7|N$ and show that $N$ is not perfect, simply, if each $\beta_{i} \equiv 2 \pmod{5}$.
\end{abstract}

\maketitle

\section{Introduction}

We define $\sigma(N)$ to be the sum of the positive divisors of $N$. We say $N$ is perfect when $\sigma(N) = 2N$. For example, $\sigma(6) = 1 + 2 + 3 + 6 = 12$ and $\sigma(28) = 1 + 2 + 4 + 7 + 14 + 28 = 56$, making both $6$ and $28$ perfect. It is still an open question as to whether or not there exists an infinite number of even perfect numbers or even a single example of an odd perfect number. Nevertheless, we let \bb{O} be the set of all odd perfect numbers.

Euler showed that if an odd perfect number exists, it must be of the form:

\begin{equation} \label{form}
N = p^\alpha q_{1}^{2\beta_{1}} \ldots q_{k}^{2\beta_{k}}
\end{equation}

\noindent where $p, q_{1}, \ldots, q_k$ are distinct odd primes, $\alpha$, $\beta_{1}$, \ldots , $\beta_{k} \in \bb{N} = \{0, 1, 2, \ldots\}$, with $p \equiv \alpha \equiv 1 \pmod{4}$. The prime $p$ is often referred to as the \emph{special} prime of $N$, and $p^\alpha$ as the \emph{Eulerian} component of $N$. Throughout this paper, when we say $N \in \bb{O}$, unless otherwise stated, we assume $N$ has the form given in \ref{form}.

Assuming $\beta_{1} = \cdots = \beta_{k} = \beta$, it has been shown for all fixed $\beta \leq 14$, except $\beta = 9$, that $N$ cannot be odd perfect. Additional results include infinite congruence classes for $\beta$. For example, McDaniel proved in \cite{Mcdan1970} that $N$ is not perfect if each $\beta_{i} \equiv 1 \pmod{3}$. Iannucci and Sorli, in \cite{ian2003}, showed $N$ is not perfect if $3|N$ and each $\beta_{i} \equiv 1 \pmod{3}$ or $\beta_{i} \equiv 2 \pmod{5}$. See Evans and Pearlman, \cite{evans2005} for a more detailed account of these types of results. In that same paper, Evans and Pearlman show

\begin{theorem} \label{premain}
Suppose $N \in \bb{O}$ and each $\beta_{i} \equiv 2 \pmod{5}$, then gcd$(N,21) = 1$ and $p \equiv 1 \pmod{12}$.
\end{theorem}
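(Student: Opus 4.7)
The plan is to first establish a cyclotomic divisibility, then deduce $3 \nmid N$ and $7 \nmid N$ by contradiction, and finally derive $p \equiv 1 \pmod{12}$ from $3 \nmid N$ combined with Euler's condition $p \equiv 1 \pmod{4}$. The key observation is that $\beta_i \equiv 2 \pmod{5}$ gives $2\beta_i + 1 \equiv 0 \pmod{5}$, so $q_i^5 - 1 \mid q_i^{2\beta_i+1} - 1$; dividing by $q_i - 1$ yields $\Phi_5(q_i) := q_i^4 + q_i^3 + q_i^2 + q_i + 1 \mid \sigma(q_i^{2\beta_i})$. Since $\sigma(N) = 2N$ and $N$ is odd, $\Phi_5(q_i) \mid N$ for every $i$.

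To prove $3 \nmid N$ I would argue by contradiction. If $3 \mid N$, then since $3 \not\equiv 1 \pmod{4}$ we must have $3 = q_j$ for some $j$, and the cyclotomic fact above gives $\Phi_5(3) = 121 = 11^2 \mid N$. Hence $11 \mid N$, and $11 \not\equiv 1 \pmod{4}$ forces $11 = q_\ell$ for some $\ell$, so $\Phi_5(11) = 5 \cdot 3221$ divides $N$, producing both $5 \mid N$ and $3221 \mid N$. I would then continue to track the chain of primes forced to divide $N$, seeking a contradiction either from an incompatibility between $v_q(\sigma(N))$ and $v_q(2N)$ at a well-chosen prime $q$ (via Lifting-the-Exponent together with $\beta_i \equiv 2 \pmod{5}$), or from the abundancy identity $\sigma(N)/N = 2$. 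The case $7 \nmid N$ runs analogously: $7 = q_j$ forces the prime value $\Phi_5(7) = 2801$ to divide $N$, launching a parallel chain.

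Once $3 \nmid N$ is in hand, deducing $p \equiv 1 \pmod{12}$ is short. Suppose $p \equiv 2 \equiv -1 \pmod{3}$; since $\alpha \equiv 1 \pmod{4}$ is odd, the sum $\sigma(p^\alpha) = 1 + p + p^2 + \cdots + p^\alpha$ reduces modulo $3$ to the alternating sum $1 + (-1) + 1 + \cdots + (-1)$ of $\alpha + 1$ (even) terms, which vanishes. Hence $3 \mid \sigma(p^\alpha) \mid \sigma(N) = 2N$, contradicting $3 \nmid N$. Thus $p \equiv 1 \pmod{3}$, and combining with $p \equiv 1 \pmod{4}$ yields $p \equiv 1 \pmod{12}$. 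The main obstacle will be closing off the prime chains for $3 \nmid N$ and $7 \nmid N$: a priori these forced chains of prime divisors need not terminate, and extracting the contradiction demands a carefully tuned valuation or modular argument specific to the primes $11$, $5$, $3221, \ldots$ (respectively $2801, \ldots$) that appear---this is the technical core of the Evans--Pearlman result.
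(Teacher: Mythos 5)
This is Theorem~\ref{premain}, which the paper does not prove at all: it is quoted from Evans and Pearlman \cite{evans2005}. So the only question is whether your proposal actually constitutes a proof, and it does not --- there is a genuine gap, and you name it yourself. Your setup is sound and matches the machinery the paper builds around this result: $\beta_i \equiv 2 \pmod 5$ gives $5 \mid 2\beta_i+1$, hence $\sigma(q_i^4) = \Phi_5(q_i) \mid \sigma(q_i^{2\beta_i}) \mid 2N$, and oddness gives $\Phi_5(q_i) \mid N$ (this is Lemma~\ref{Pearlman2005} with $f=4$). The final step is also correct and complete: if $3 \nmid N$ then $p \not\equiv -1 \pmod 3$ since $(p+1) \mid \sigma(p^\alpha) \mid 2N$, and $p \neq 3$ because $p \equiv 1 \pmod 4$, so $p \equiv 1 \pmod 3$ and hence $p \equiv 1 \pmod{12}$. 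But the two central claims, $3 \nmid N$ and $7 \nmid N$, are only gestured at: ``I would then continue to track the chain of primes\ldots seeking a contradiction'' is a research plan, not an argument.

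The gap is not a formality, because the chains you launch immediately hit the obstruction that makes these proofs hard. From $3 \mid N$ you get $11 \mid N$ and then $\sigma(11^4) = 5 \cdot 3221$, where both $5$ and $3221$ are $\equiv 1 \pmod 4$ and so are candidates for the special prime $p$; similarly $\Phi_5(7) = 2801 \equiv 1 \pmod 4$. At such a prime the chain cannot be continued unconditionally (the special prime's exponent $\alpha$ need not be $\equiv 4 \pmod 5$, so $\sigma(r^4) \nmid \sigma(r^\alpha)$), which forces a branching case analysis on which prime is special --- exactly the complication the paper wrestles with in Remark~\ref{specPrime} and Section 3. Moreover, the standard way to close such an argument is quantitative (accumulate enough forced primes to violate $\sigma(N)/N = 2$ or a reciprocal-sum bound), not the ``valuation incompatibility via Lifting-the-Exponent'' you float as a fallback; no such closure is exhibited for either the $3$-chain or the $7$-chain. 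As written, the proposal establishes the easy reduction and leaves the theorem's actual content unproved.
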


Using this result, and applying a different method of proof, we show

\begin{theorem} \label{main}
Suppose $N$ is as described in (\ref{form}) and each $\beta_{i} \equiv 2 \pmod{5}$, then $N$ is not perfect.
\end{theorem}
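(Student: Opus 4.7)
The plan is to combine Theorem~\ref{premain} with a cyclotomic factorization of each factor $\sigma(q_i^{2\beta_i})$, and then close off a finite case analysis.

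Suppose for contradiction that $N$ is perfect, so Theorem~\ref{premain} applies: $\gcd(N,21)=1$ and $p\equiv 1\pmod{12}$. Since $\beta_i\equiv 2\pmod 5$, the exponent $2\beta_i+1$ is divisible by $5$; writing $2\beta_i+1=5m_i$ gives
\[
\sigma(q_i^{2\beta_i}) \;=\; \frac{q_i^{5m_i}-1}{q_i-1} \;=\; \prod_{\substack{d\,\mid\,5m_i\\ d>1}}\Phi_d(q_i),
\]
so the cyclotomic value $\Phi_5(q_i)=1+q_i+q_i^2+q_i^3+q_i^4$ divides $\sigma(N)=2N$ for every $i$. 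A standard multiplicative-order argument then shows that any prime divisor $\ell$ of $\Phi_5(q_i)$ satisfies either $\ell=5$ (and then $q_i\equiv 1\pmod 5$) or $\ell\equiv 1\pmod 5$; in either case $\ell$ is odd and must therefore occur in the set $\{p,q_1,\dots,q_k\}$.

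These divisibility constraints, together with Theorem~\ref{premain} restricting the special prime to $p\in\{13,37,61,73,97,\dots\}$ and each $q_i\geq 5$ with $q_i\neq 7$, reduce the problem to a finite search. The abundancy bound
\[
2 \;=\; \frac{\sigma(p^\alpha)}{p^\alpha}\prod_{i=1}^{k}\frac{\sigma(q_i^{2\beta_i})}{q_i^{2\beta_i}} \;<\; \frac{p}{p-1}\prod_{i=1}^{k}\frac{q_i}{q_i-1}
\]
gives an explicit upper bound on $k$ once one substitutes the smallest admissible primes. One then enumerates candidate tuples $(p,q_1,\dots,q_k)$ of admissible primes, and for each candidate checks whether the cyclotomic divisibilities $\Phi_5(q_i)\mid 2N$ are consistent with the prime set and with the defining equation $\sigma(N)=2N$ for some admissible choice of exponents $\alpha\equiv 1\pmod 4$ and $\beta_i\equiv 2\pmod 5$.

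The principal obstacle is controlling the branching of this search: each new prime $q_i$ forces $\Phi_5(q_i)$ to factor completely over the already-chosen prime set, yet $\Phi_5(q_i)$ may contain prime factors considerably larger than $q_i$, so a direct enumeration threatens to blow up. The crux of the argument is a pruning strategy --- most naturally, filtering candidates by their residues modulo the small primes $\ell\equiv 1\pmod 5$ (namely $11,31,41,61,\dots$) in order to quickly force or exclude each such $\ell$ as a factor of $N$ --- that keeps the case analysis tractable and allows it to terminate. This is presumably where the ``data prepared for this paper'' alluded to in the author's note enters the proof.
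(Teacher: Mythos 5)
You have the right local mechanism but the wrong global architecture, and the global step is where the gap lies. Locally you are on target: $\Phi_5(q_i)=\sigma(q_i^4)$ divides $\sigma(q_i^{2\beta_i})$ and hence $N$ (this is Lemma~\ref{Pearlman2005} with $f=4$), its prime factors are $5$ or $\equiv 1\pmod 5$, and Theorem~\ref{premain} together with $(p+1)\mid\sigma(p^\alpha)$ and $7\nmid N$ restricts which of these primes can be special (note this also excludes your candidates $p=13$ and $p=97$, since $p+1$ would force $7\mid N$). But your closing move fails in two ways. First, the inequality $2<\frac{p}{p-1}\prod\frac{q_i}{q_i-1}$ does not bound $k$ from above: substituting the smallest admissible primes makes each factor as large as possible, so it yields only a \emph{lower} bound on how many primes are needed, and $k$ remains a priori unbounded. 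There is therefore no finite list of candidate tuples $(p,q_1,\dots,q_k)$ to enumerate, and no contradiction is ever specified --- ``checking consistency'' over an infinite family is not a termination criterion. Second, your argument never establishes that any particular prime actually divides $N$, so the cascade of forced divisors has no starting point.

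What the paper actually does is a forced accumulation, not a search. Lemma~\ref{Kanold1941} seeds it: since $5\mid 2\beta_i+1$ for every $i$, one gets $5^4\mid N$ unconditionally. From $5$ the paper grows $\sigma$-chains $r_{j+1}\mid\sigma(r_j^4)$, forcing each prime encountered into $N$ unless it is a special-prime candidate (those are handled by exhibiting two chains reaching them that share no other primes, so at most one chain can be blocked). The contradiction is quantitative rather than combinatorial: Cohen's inequality $1+x+x^2>e^x$ converts $\sigma(N)=2N$ into an explicit upper bound $\frac1p+\sum_i 1/q_i<0.6646602$ (sharpened using Proposition~\ref{multi} and the case analysis on $p$), while the roughly $961{,}416$ primes accumulated in $\bb{T}$ give a reciprocal sum exceeding $0.6652951$. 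Your ``pruning strategy that keeps the case analysis tractable'' is a placeholder for exactly this missing mechanism; as written, the proposal describes the difficulty rather than resolving it.
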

\noindent Theorem \ref{main} subsumes a number of theorems like Theorem \ref{premain}.

\section{Preliminary Results}

We start with some elementary properties of $\sigma$:

\begin{enumerate}
\item if $q$ is prime, $k \in \bb{N}$, then $\sigma(q^{k}) = 1 + q + q^{2} + \ldots + q^{k}$. 
\item $\sigma$ is multiplicative. That is, if gcd$(m,n) = 1$, then $\sigma(m*n) = \sigma(m) * \sigma(n)$.
\end{enumerate}

\noindent For $N \in \bb{O}$:

\begin{equation}
\sigma(N) = \sigma(p^\alpha) \sigma(q_{1}^{2\beta_{1}}) \ldots \sigma(q_{k}^{2\beta_{k}}) = 2N
\end{equation}

\noindent Which makes clear that any prime $r$ dividing $\sigma(q_{i}^{2\beta_{i}})$ for some $i$, must also divide $N$. Additionally, $\alpha$ odd implies that for $\alpha = 2a+1$

\begin{eqnarray}
\sigma(p^\alpha) & = & 1+p+ \dots + p^{2k} + p^{2a+1} \nonumber \\
& = & (1+p) + \ldots + (1+p)p^{2a} \nonumber \\
& = & (1+p)(1 + p^{2} + \dots + p^{2a}) \label{p+1}
\end{eqnarray}

\noindent Thus, $(p+1)|\sigma(p^\alpha)$ and so any odd prime dividing $(p+1)$ also divides $N$.

We will make use of the following lemmas. The proof of the first can be found in \cite{pearl2005} and the second in \cite{kanold1941}.

\begin{lemma} \label{Pearlman2005}
$\sigma(s^{f})|\sigma(s^{f+(f+1)m})$ for all primes $s$ and all $m,f \in \bb{N}$.
\end{lemma}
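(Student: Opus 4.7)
The plan is to prove the divisibility by exhibiting an explicit quotient. I would start from the closed-form expression $\sigma(s^n) = 1 + s + \cdots + s^n = \frac{s^{n+1}-1}{s-1}$, which is available since $s$ is prime (so $s \neq 1$). The key observation is that the exponent $f + (f+1)m + 1 = (f+1)(m+1)$ is a multiple of $f+1$, which is exactly what lets the factorization $a^{m+1} - 1 = (a-1)(1 + a + a^2 + \cdots + a^m)$ do its work with $a = s^{f+1}$.

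Concretely, I would write
\[
\sigma(s^{f+(f+1)m}) \;=\; \frac{s^{(f+1)(m+1)} - 1}{s-1} \;=\; \frac{(s^{f+1})^{m+1} - 1}{s-1},
\]
and then factor the numerator as
\[
(s^{f+1})^{m+1} - 1 \;=\; (s^{f+1} - 1)\bigl(1 + s^{f+1} + s^{2(f+1)} + \cdots + s^{m(f+1)}\bigr).
\]
Dividing by $s-1$ then yields
\[
\sigma(s^{f+(f+1)m}) \;=\; \sigma(s^{f}) \cdot \bigl(1 + s^{f+1} + s^{2(f+1)} + \cdots + s^{m(f+1)}\bigr),
\]
and since the second factor is a positive integer, the claimed divisibility follows.

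There is essentially no obstacle here; the only thing to be mindful of is treating the boundary case $m = 0$ (where the statement degenerates to $\sigma(s^f) \mid \sigma(s^f)$) and noting that the argument is valid for any integer $s \geq 2$, so the primality of $s$ is not really used beyond ensuring $s \neq 1$. I would also mention, as a useful mnemonic for later applications in the paper, that the quotient $1 + s^{f+1} + s^{2(f+1)} + \cdots + s^{m(f+1)}$ is itself $\sigma(y^m)$ with $y = s^{f+1}$, which sometimes lets one iterate the lemma.
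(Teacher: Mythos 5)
Your proof is correct and complete. The paper itself does not prove this lemma --- it simply cites Pearlman's paper and remarks that the result ``is often discussed in the context of cyclotomic polynomials,'' i.e.\ via the factorization $x^{n}-1=\prod_{d\mid n}\Phi_{d}(x)$, from which $(f+1)\mid(f+1)(m+1)$ immediately gives the divisibility of the corresponding products of cyclotomic values. Your route is the elementary one: writing $\sigma(s^{n})=(s^{n+1}-1)/(s-1)$, observing that $f+(f+1)m+1=(f+1)(m+1)$, and factoring $(s^{f+1})^{m+1}-1=(s^{f+1}-1)(1+s^{f+1}+\cdots+s^{m(f+1)})$ produces an explicit integer quotient, which is arguably preferable here since the paper explicitly says it needs nothing stronger than this bare divisibility. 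You are also right that primality of $s$ is irrelevant beyond $s\neq 1$, and the $m=0$ case is trivially fine. One small caution on your closing mnemonic: the quotient $1+s^{f+1}+\cdots+s^{m(f+1)}$ is \emph{not} $\sigma(y^{m})$ for $y=s^{f+1}$ under the paper's (standard) definition of $\sigma$ as the sum of divisors, because $y$ is not prime and $\sigma(s^{(f+1)m})$ would be the full sum $1+s+\cdots+s^{(f+1)m}$; the quotient merely has the same polynomial shape. That remark is cosmetic and does not affect the validity of the argument.
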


Let $N \in \bb{O}$ be written as in (\ref{form}). Define $\gamma_{i} = 2\beta_{i} + 1$ for $1 \leq i \leq k$.

\begin{lemma} \label{Kanold1941}
Suppose $d|\gamma_{i}$ for each i, then $d^{4}|N$.
\end{lemma}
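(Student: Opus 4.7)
The plan is to push the divisibility $\sigma(q_{i}^{d-1})\mid N$ (which will follow for each $i$ from Lemma \ref{Pearlman2005}) through to the stronger statement $d^{4}\mid N$ by aggregating contributions across the $k$ indices.

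First, fix $i$ and write $\gamma_{i}=d\,m_{i}$, which is legitimate because $d\mid\gamma_{i}$. Applying Lemma \ref{Pearlman2005} with $s=q_{i}$, $f=d-1$, and $m=m_{i}-1\ge 0$ gives
\[
\sigma(q_{i}^{d-1})\mid \sigma\!\left(q_{i}^{(d-1)+d(m_{i}-1)}\right)=\sigma(q_{i}^{2\beta_{i}}).
\]
Since $\gamma_{i}$ is odd and $d\mid\gamma_{i}$, the integer $d$ is odd; consequently $\sigma(q_{i}^{d-1})$ is a sum of $d$ odd terms and is itself odd. Because $\sigma(q_{i}^{2\beta_{i}})\mid\sigma(N)=2N$ and $\sigma(q_{i}^{2\beta_{i}})$ is also odd (same parity reason, using that $\gamma_{i}$ is odd), I get $\sigma(q_{i}^{d-1})\mid N$ for every $1\le i\le k$.

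Next I would reduce to the case where $d=r$ is prime; once $r^{4}\mid N$ is known for every prime power appearing in $d$, the coprimality of distinct prime powers of $d$ promotes the local bounds to $d^{4}\mid N$. For the prime case, the natural tool is the cyclotomic factorization $\sigma(q_{i}^{r-1})=\Phi_{r}(q_{i})$, together with the standard fact that any prime $\ell\mid\Phi_{r}(q_{i})$ either equals $r$ (which occurs exactly when $q_{i}\equiv 1\pmod{r}$, and in that case $v_{r}(\Phi_{r}(q_{i}))=1$) or satisfies $\ell\equiv 1\pmod{r}$; in the latter situation $\ell$ must lie in the prime support $\{p,q_{1},\dots,q_{k}\}$ of $N$, since $\sigma(q_{i}^{r-1})\mid N$.

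The main obstacle is the accounting step that upgrades this information to the exponent $4$. One has to show, via a case split on whether $r\in\{p,q_{1},\dots,q_{k}\}$ and on which (if any) of the $q_{i}$ satisfy $q_{i}\equiv 1\pmod{r}$, that the combined $r$-adic valuation extracted from the perfect-number identity $\sigma(p^{\alpha})\prod_{i}\sigma(q_{i}^{2\beta_{i}})=2N$ cannot fall below $4$. Euler's constraints $p\equiv\alpha\equiv 1\pmod{4}$ and the relation $(p+1)\mid\sigma(p^{\alpha})$ derived in \eqref{p+1} supply the extra rigidity needed so that the $r$-adic valuations on the two sides of $\sigma(N)=2N$ balance only when at least four factors of $r$ are present in $N$. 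Making this coordination fully rigorous, avoiding double-counting of primes shared among different $\Phi_{r}(q_{i})$, is the technical heart of the Kanold argument.
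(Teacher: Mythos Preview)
The paper does not supply its own proof of Lemma~\ref{Kanold1941}; it simply attributes the result to Kanold and refers the reader to \cite{kanold1941}. So there is no in-paper argument to compare your sketch against.

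That said, your outline has two genuine gaps worth flagging. First, the reduction ``to the case where $d=r$ is prime'' is not sound as written: if $d$ has a repeated prime factor, say $d=r^{2}m$, then establishing $r^{4}\mid N$ for each prime $r\mid d$ yields only $\bigl(\prod_{r\mid d}r\bigr)^{4}\mid N$, not $d^{4}\mid N$. You would instead need $(r^{a})^{4}\mid N$ for each maximal prime power $r^{a}\,\|\,d$, and that is \emph{not} the prime case. Second, you explicitly leave the ``accounting step'' --- balancing $r$-adic valuations across $\sigma(p^{\alpha})\prod_{i}\sigma(q_{i}^{2\beta_{i}})=2N$ to force $v_{r}(N)\ge 4$ --- as an unfinished case analysis. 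That step is the entire content of the lemma; the cyclotomic facts you cite (primes dividing $\Phi_{r}(q_{i})$ are either $r$ or $\equiv 1\pmod r$) are correct and relevant, but without actually carrying out the valuation bookkeeping you have only set the stage, not given a proof.
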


\begin{remark} \label{chainStarter}
Observe the assumption in Theorem \ref{main} that each $\beta_{i} \equiv 2 \pmod{5}$ is equivalent to assuming $d=5$ in Lemma \ref{Kanold1941}.
\end{remark}

\noindent Further note, with $f=4$ and $2\beta_{i} \equiv 4 \pmod{5} = 5m+4$ for some $m \in \bb{N}$, Lemma \ref{Pearlman2005} implies 
\begin{equation}
\sigma(s^{4})|\sigma(s^{5m+4}) = \sigma(s^{2\beta_{i}}).
\end{equation}
\noindent Though Lemma \ref{Pearlman2005} is often discussed in the context of cyclotomic polynomials, we won't need results any more powerful than the above. These two ideas lie at the foundation of every result similar to Theorem \ref{premain} and Theorem \ref{main}.

\section{$\sigma$-Chains}

Under the hypotheses of Theorem \ref{main}, suppose $r_{0}$ is a prime that divides $\sigma(q_{i}^{4})$. It follows immediately that

\begin{equation}
r_{0}|\sigma(q_{i}^{4})|\sigma(q_{i}^{2\beta_{i}})|N \Rightarrow r_{0}|N.
\end{equation}

Thus, if $r_{0} \not \equiv 1 \pmod{4}$, then $r_{0}$ cannot be the special prime in the prime factorization of $N$, so there must be some $q_{i}$ where $r_{0} = q_{i}$.
\begin{remark} \label{specPrime}
Applying Theorem \ref{premain}, we can reach this same conclusion so long as $r_{0} \not \equiv 1 \pmod{12}$. Additionally, since we know $7 \not |N$, (\ref{p+1}) implies $r_{0} \not \equiv 6 \pmod{7}$.
\end{remark}
\noindent For the rest of this paper, we assume our special prime $p$ satisfies these two conditions.

Applying the same reasoning to a prime, say $r_{1}$, such that $r_{1}|\sigma(r_{0}^{4})$, so long as $r_{1}$ does not satisfy the conditions of Remark \ref{specPrime}, then we may conclude $r_{1}|N$. Continuing this process, we may construct a chain $\{r_{0}, r_{1}, r_{2}, \ldots \}$ of primes satisfying $r_{i+1}|\sigma(r_{i}^{4})$ whereby each $r_{i+1}$ must divide $N$.

Traditionally, these chains have been used to accumulate enough primes to show that $\sigma(N) > 2$, thus contradicting $\sigma(N) = 2$. We will be using them to create a contradiction on a different internal structure of $N$. For the moment, we will use them to show the following

\begin{proposition} \label{propT}
If $N \in \bb{O}$ satisfies the hypotheses of Theorem \ref{main}, each prime in the collection $\bb{T} = \{5, 11, 31, 41, 71, 101, 131, 151, 181, 191, 211\}$ divides $N$.
\end{proposition}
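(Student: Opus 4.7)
The plan is to use the $\sigma$-chain construction described earlier in this section, rooted at $r_{0}=5$, and to verify that every element of $\mathbb{T}$ eventually enters the tree of primes forced to divide $N$. By Remark \ref{chainStarter} together with Lemma \ref{Kanold1941}, one has $5^{4}\mid N$, so $5\in\mathbb{T}$ provides the seed. For each prime $r$ already known to divide $N$, one computes $\sigma(r^{4})=(r^{5}-1)/(r-1)$ and factors it; every prime factor $r'$ divides $N$ unless $r'$ is a viable candidate for the special prime $p$, i.e.\ unless $r'\equiv 1\pmod{12}$ and $r'\not\equiv 6\pmod{7}$, per Remark \ref{specPrime}. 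Since $\sigma(r^{4})$ typically has several prime factors and each new prime generates further ones, the iteration is really a branching tree rather than a linear chain.

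The key preliminary observation is that every $\pi\in\mathbb{T}$ fails at least one of the two special-prime exclusion conditions: reducing mod $12$ shows that every element of $\mathbb{T}$ other than $181$ is not $\equiv 1\pmod{12}$, while $181\equiv 1\pmod{12}$ but $181\equiv 6\pmod{7}$. Consequently, whenever some $\pi\in\mathbb{T}$ appears as a prime factor of $\sigma(r^{4})$ for a chain element $r$, we may immediately conclude $\pi\mid N$. As a starting example, $\sigma(5^{4})=781=11\cdot 71$ immediately delivers $11,71\in\mathbb{T}$; then $\sigma(11^{4})=5\cdot 3221$ and $\sigma(71^{4})=5\cdot 11\cdot 468631$ add new primes to the tree, from which further generations are expanded in the same way.

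The main obstacle is computational rather than conceptual: the integers $(r^{5}-1)/(r-1)$ grow quickly, and reaching the remaining members $31,41,101,131,151,181,191,211$ of $\mathbb{T}$ requires pushing the chain through several generations and factoring moderately large composites. Given the author's remark about the accompanying data, this portion of the proof is essentially a tabulation: exhibit a specific sequence of primes $r$ together with the factorizations of $\sigma(r^{4})$, and read off each $\pi\in\mathbb{T}$ as it arises. Once such a table is in hand, each individual step is a routine divisibility check followed by the two mod-$12$ and mod-$7$ tests from Remark \ref{specPrime}.
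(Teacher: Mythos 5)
Your method is exactly the paper's: seed with $5\mid N$ (Remark \ref{chainStarter}), repeatedly factor $\sigma(r^{4})$, and use the mod-$12$ and mod-$7$ tests of Remark \ref{specPrime} to rule out special-prime candidates. The gap is that the entire content of this proposition is the tabulation you defer. The paper's proof consists of six explicit chains, for example $\{5,71,211,1361,11831,17249741,41\}$ and $\{211,292661,191,13001,32491,34031,101,31\}$, which between them reach every element of $\mathbb{T}$, followed by the observation that no prime occurring anywhere in those chains is $\equiv 1\pmod{12}$ except $181$, which is also $\equiv 6\pmod{7}$. Without exhibiting such chains you have only established $5,11,71\in\mathbb{T}$ (and, implicitly, $211$): your sample factorization $\sigma(71^{4})=5\cdot 11\cdot 468631$ is incomplete, since $468631=211\cdot 2221$ --- which is precisely how the paper's second chain picks up $211$. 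Reaching $31,41,101,131,151,181$ requires chains passing through eight- and nine-digit composites, so ``routine tabulation'' understates what must actually be displayed for the proof to stand.

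There is also a small logical misplacement worth fixing. Once $r$ is known to be a non-special component of $N$, \emph{every} prime factor $r'$ of $\sigma(r^{4})$ divides $N$ unconditionally; the special-prime tests are not needed to conclude $r'\mid N$, but to justify extending the chain through $r'$, i.e.\ to guarantee $\sigma(r'^{4})\mid N$ at the next step. Consequently the decisive check is that every \emph{intermediate} prime in every chain fails the special-prime conditions (such as $292661$, $13001$, $32491$, $34031$ above), not merely that the members of $\mathbb{T}$ do, which is the check your write-up emphasizes. Your observation that $181$ is saved by the mod-$7$ condition is correct and matches the paper, but it is the interior primes of the chains that carry the burden of the argument.
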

\begin{proof}
By Remark \ref{chainStarter}, we know $5|N$, which we use to construct our first two chains:
\chain{5, 11}
\chain{5, 71, 211, 1361, 11831, 17249741, 41}
Once we know $211|N$, we may construct the chain:
\chain{211, 292661, 191, 13001, 32491, 34031, 101, 31}
\noindent and after $191|N$,
\chain{191, 1871, 151}
\chain{191, 13001, 17981, 613680341, 1478611, 520392931, 336491, 4231, 216211, 131}
\chain{191, 13001, 32491, 34031, 350411, 47791, 26561, 181}
A quick scan through each chain shows that none of the primes are congruent to $1 \pmod{12}$, except 181 which is also $6 \pmod{7}$. Thus, none of the primes can be the special prime, and since each element of \bb{T} appears, we are done.
\end{proof}

\begin{proposition} \label{multi}
For the same $N$, each prime in \bb{T} divides $N$ at least $5$ times.
\end{proposition}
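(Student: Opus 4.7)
The plan is a proof by contradiction. Fix $q \in \bb{T}$; by Proposition \ref{propT} we have $q = q_j$ for some index $j$, with exponent $2\beta_j \equiv 4 \pmod{10}$. Suppose for contradiction that $2\beta_j = 4$, so $v_q(N) = 4$; note that if we succeed in deriving a contradiction, the congruence immediately forces $2\beta_j \geq 14$, in particular $q^5 \mid N$. The key accounting identity is
\[
v_q(N) \;=\; v_q(\sigma(N)) \;=\; \sum_{i \neq j} v_q\bigl(\sigma(q_i^{2\beta_i})\bigr) + v_q\bigl(\sigma(p^\alpha)\bigr),
\]
since $\gcd(\sigma(q^{2\beta_j}),q)=1$. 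Thus, if I can exhibit five primes $r_1,\dots,r_5$ dividing $N$ (none equal to $q$) with $v_q\bigl(\sigma(r_\ell^{2\beta_{i(\ell)}})\bigr) \geq 1$ for each $\ell$, then $v_q(N) \geq 5 > 4$ is the desired contradiction.

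Given only the constraint $2\beta_i \equiv 4 \pmod{10}$ on the unknown exponents, the cleanest way to guarantee such a contribution is via a small multiplicative order. Indeed, if $r \neq q$ has order exactly $5$ modulo $q$, then $5 \mid 2\beta_i+1$ gives $q \mid r^{2\beta_i+1}-1$ while $q \nmid r-1$, so $v_q(\sigma(r^{2\beta_i})) \geq 1$ for every admissible $\beta_i$. For the special case $q=5$ the same conclusion holds whenever $r \equiv 1 \pmod 5$, because then $\sigma(r^{2\beta_i}) \equiv 2\beta_i+1 \equiv 0 \pmod 5$. These are the two mechanisms that will have to produce five qualifying primes for each $q \in \bb{T}$.

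For $q=5$ the first mechanism is immediate: all ten primes of $\bb{T}\setminus\{5\}$ are $\equiv 1 \pmod 5$ (they are generated from $5$ by successive application of the rule $r_{i+1}\mid\sigma(r_i^4)$, which forces $5\mid r_{i+1}-1$), giving $v_5(N) \geq 10$ at once. For $q=11$ a residue check shows that $\{5,31,71,181,191\} \subset \bb{T}\setminus\{11\}$ all have order $5$ modulo $11$, again yielding the required five contributions. For each of the remaining nine primes of $\bb{T}$ the set of qualifying residues within $\bb{T}$ is generally too small, and one draws instead on the larger pool of primes already forced to divide $N$ as a byproduct of the six chains in the proof of Proposition \ref{propT} (such as $1361$, $13001$, $216211$, $1478611$, $17249741$), testing each for order $5$ modulo the current $q$.

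The main obstacle is computational rather than conceptual: for the larger elements of $\bb{T}$ (for instance $q=31$ or $q=41$) one may find only one or two order-$5$ witnesses among the primes currently at hand, and so must extend the chain construction further. Each new $r\mid N$ forces $\sigma(r^4)\mid N$, supplying additional primes that can then be tested for order $5$ modulo $q$; one iterates until five witnesses are in hand. At every step one must check that the newly generated prime fails the special-prime conditions of Remark \ref{specPrime}, lest the chain terminate prematurely. The argument therefore reduces to a finite, bookkeeping-intensive verification performed once for each of the eleven primes $q \in \bb{T}$.
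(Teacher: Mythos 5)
The proposal is correct and takes essentially the same approach as the paper: for each $t \in \bb{T}$ it exhibits five distinct non-special prime divisors $r$ of $N$ with $t \mid \sigma(r^{4})$ (your order-$5$-mod-$t$ criterion is exactly this condition), so that five distinct factors $\sigma(r^{2\beta_i})$ of $\sigma(N) = 2N$ each contribute a factor of $t$. Like the paper, you defer the explicit witness lists for most elements of $\bb{T}$ to a finite computation; the contradiction framing is harmless but unnecessary, since the count directly gives $v_t(N) \geq 5$.
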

\begin{proof}
We skip most of the details and simply point out that
\begin{center} $11\ $ divides $\ \sigma(5^{4}), \sigma(31^{4}), \sigma(71^{4}), \sigma(191^{4})$, and $\sigma(311^{4})$ \end{center}
\begin{center} $31\ $ divides $\ \sigma(101^{4}), \sigma(281^{4}), \sigma(1031^{4}), \sigma(1151^{4})$, and $\sigma(1721^{4})$ \end{center}
\noindent and similarly, every element of \bb{T} can be shown to appear in at least $5\ \sigma$-chains of various primes under $10000$, except 181, which requires us to go to as high as 18521.
\end{proof}

For the rest of the paper we expand the definition of \bb{T} to include all of the primes demonstrated through $\sigma$-chains to divide $N$. Suppose now that it has been demonstrated $\{ 821, 55001\} \subset \bb{T}$. Consider the following two chains:
\chain{821, 241, 61}
\chain{55001, 2521, 61}
Observe that $61, 241,$ and $2521$ each satisfy the conditions of Remark \ref{specPrime} and are candidates for being the special prime. Neither chain has any primes in common except for the last element, 61. At most, one of 241 or 2521 may be the special prime. By virtue of this fact, one of the two chains shows $61 \in \bb{T}$.

\section{A Closer Look at the Form of $N$} \label{ewellForm}

In 1980, Ewell \cite{ewell1980} demonstrated the following:
\begin{theorem} \label{Ewell}
Let $N = p^\alpha q_{1}^{2\beta_{1}} \ldots q_{k}^{2\beta_{k}}$ be odd perfect. Set $\alpha = 4\epsilon+1$ and $q_{i} = 2\pi_{i}-1$. Assume $3 \not | N$, then
\begin{enumerate}
\item $p \equiv 1 \pmod{12}$,
\item $\epsilon \equiv 0$ or $-1 \pmod{3}$,
\item $\pi_{i}\beta_{i} \equiv 0$ or $-1 \pmod{3}$ for each $i$,
\item The number of elements in the set $\{\pi_{1}\beta_{1}, \ldots, \pi_{k}\beta_{k}\}$ for which $\pi_{i}\beta_{i} \equiv -1 \pmod{3}$ is even when $\epsilon \equiv 0 \pmod{3}$ and odd when $\epsilon \equiv -1 \pmod{3}$.
\end{enumerate}
\end{theorem}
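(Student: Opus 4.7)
The plan is to read the equation $\sigma(N) = 2N$ modulo $3$ and extract all four conclusions from the resulting sign arithmetic. The hypothesis $3 \nmid N$ is the key structural input, because it forces every factor $\sigma(p^\alpha)$ and $\sigma(q_i^{2\beta_i})$ to be a unit modulo $3$, and it forces each $q_i \not\equiv 0 \pmod 3$ so that $q_i^{2\beta_i} \equiv 1 \pmod 3$, giving the clean identity $N \equiv p^\alpha \pmod 3$.

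First I would handle $p$. Using $\alpha = 4\epsilon+1$, a direct geometric-sum computation gives $\sigma(p^\alpha) \equiv 4\epsilon + 2 \pmod 3$ when $p \equiv 1 \pmod 3$ and $\sigma(p^\alpha) \equiv 0 \pmod 3$ when $p \equiv -1 \pmod 3$ (because then the $\alpha+1$ signs alternate and pair up evenly). The second case contradicts $3 \nmid \sigma(N)$, so $p \equiv 1 \pmod 3$, which combined with Euler's $p \equiv 1 \pmod 4$ yields conclusion (1). Conclusion (2) then follows immediately: we need $4\epsilon + 2 \not\equiv 0 \pmod 3$, i.e.\ $\epsilon \not\equiv 1 \pmod 3$.

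For (3), I would translate $q_i \pmod 3$ into $\pi_i \pmod 3$ via $q_i = 2\pi_i - 1$: the condition $3 \nmid q_i$ rules out $\pi_i \equiv 2 \pmod 3$, and the remaining cases give $q_i \equiv 1 \pmod 3 \iff \pi_i \equiv 1 \pmod 3$ and $q_i \equiv -1 \pmod 3 \iff \pi_i \equiv 0 \pmod 3$. A short case analysis of $\sigma(q_i^{2\beta_i}) \pmod 3$ shows the value is $1$ when $q_i \equiv -1 \pmod 3$, and $2\beta_i + 1$ when $q_i \equiv 1 \pmod 3$. Non-vanishing mod $3$ in the second case forces $\beta_i \not\equiv 1 \pmod 3$. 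Reading these constraints through the map $(\pi_i,\beta_i) \mapsto \pi_i \beta_i \pmod 3$ gives exactly the statement $\pi_i \beta_i \equiv 0$ or $-1 \pmod 3$, which is (3).

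Finally, for (4), I would tabulate the sign of each local factor. The case analysis above shows $\sigma(q_i^{2\beta_i}) \equiv -1 \pmod 3$ precisely when $\pi_i \equiv 1$ and $\beta_i \equiv -1 \pmod 3$, i.e.\ precisely when $\pi_i \beta_i \equiv -1 \pmod 3$; otherwise the factor is $\equiv 1 \pmod 3$. Letting $E$ denote the number of indices with $\pi_i \beta_i \equiv -1 \pmod 3$, and using $\sigma(p^\alpha) \equiv -1$ or $+1 \pmod 3$ according as $\epsilon \equiv 0$ or $-1 \pmod 3$, together with $2N \equiv 2 p^\alpha \equiv -1 \pmod 3$, the identity $\sigma(N) \equiv 2N \pmod 3$ collapses to $(-1)^{E+\delta} \equiv -1 \pmod 3$ with $\delta \in \{0,1\}$ determined by $\epsilon$, giving the parity statement in (4). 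There is no real obstacle here beyond bookkeeping of signs; the whole argument is a single careful reduction mod $3$.
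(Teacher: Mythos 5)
Your proof is correct: the reduction of $\sigma(N)=2N$ modulo $3$, the case analysis on $p$ and the $q_i$ forced by $3\nmid\sigma(N)$, and the final parity count of factors $\equiv -1 \pmod 3$ all check out and deliver exactly the four conclusions. The paper itself gives no proof of this theorem (it simply cites Ewell's 1980 paper), and your sign-arithmetic argument mod $3$ is essentially Ewell's original one, so there is nothing further to compare.
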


Applying our assumption $\beta_{i} \equiv 2 \pmod{5}$ to this result, simple algebra shows that $N$, after some relabeling, may be written as
\begin{equation}
N = p^\alpha \bprod{i=1}{s} q_{i}^{10\beta_{i}+4} \bprod{j=1}{t} r_{j}^{30\gamma_{j}+4} \bprod{k=1}{u} s_k^{30\delta_{k}+24}
\end{equation}
\noindent Where $p, q_{i}, r_{j}$, and $s_{k}$ are primes with $p \equiv 1 \pmod{12}$, each $q_{i} \equiv -1 \pmod{6}$, and each $r_{j}$,$s_{k} \equiv 1 \pmod{6}$ and, for their respective subscripts, each $\beta$, $\gamma$, $\delta$, and $\alpha \in \bb{N}$ with $\alpha \equiv 1$ or $9 \pmod{12}$.

This form of $N$ is a bit cumbersome, but it does clearly demonstrate one useful piece of information.
\begin{remark} \label{multiHigher}
From Proposition \ref{multi}, it is immediate that $5|N$ at least 14 times, $11|N$ at least 14 times, $31|N$ at least 24 times, and so on, according to each prime's residue (mod 6).
\end{remark}

\section{The Sum of the Reciprocal of the Primes of $N$} \label{upperBound}
We now consider \cite{cohen1978}. Cohen demonstrates the following for odd perfect numbers such that $5|N$ and $3 \not |N$:
\begin{equation}
\bsum{q|N}{} \frac{1}{q} < .677637 \nonumber
\end{equation}

\noindent To this author's knowledge, the best upper and lower bounds of these types appear in \cite{cohen1980}, and we include them here.

\begin{eqnarray}
Divisors & Lower Bound & Upper Bound \nonumber \\
3\not |N, 5|N & .647649 & .677637 \nonumber \\
3\not |N, 5 \not |N & .667472 & .693148 \nonumber \\
3 |N, 5 |N & .596063 & .673634 \nonumber \\
3\not |N, 5 \not |N & .604707 & .657304 \nonumber
\end{eqnarray}

Given that we have additional knowledge regarding the structure of $N$, it is natural we try to improve on this bound for our particular case. Cohen starts his proof off by demonstrating a stronger form of the following inequality, for $0 < x \leq \frac{1}{3}$

\begin{equation} \label{CohenIneq}
1+x+x^2 > \exp(x)
\end{equation}

\noindent We assume (\ref{CohenIneq}) and let $N \in \bb{O}$. Thus,
\begin{equation} \nonumber
2N = \sigma(N) = (1+p+p^{2}+ \ldots + p^{\alpha}) \bprod{i=1}{k} (1+q_{i}+q_{i}^{2}+ \ldots + q_{i}^{2\beta_{i}})
\end{equation}

\noindent Dividing through by $N$ and utilizing $\alpha \geq 1$ yields,
\begin{equation} \nonumber
2 \geq (1+\frac{1}{p}) \bprod{i=1}{k} (1+\frac{1}{q_{i}}+\frac{1}{q_{i}^{2}}+ \ldots + \frac{1}{q_{i}^{2\beta_{i}}})
\end{equation}

We separate the $q_{i}$ based on whether or not they appear in \bb{T}. As per Remark \ref{multiHigher}, we let $\gamma_{i}$ be 14 or 24 depending on whether $q_{i} \equiv -1 \pmod{6}$ or $q_{i} \equiv 1 \pmod{6}$, respectively for $q_{i} \in \bb{T}$. And finally, we truncate for the primes not in \bb{T}.
\begin{equation} \nonumber
2 > (1+\frac{1}{p}) \bprod{q_{i} \in \bb{T}}{} (1+\frac{1}{q_{i}}+\frac{1}{q_{i}^{2}}+ \ldots + \frac{1}{q_{i}^{\gamma_{i}}}) \bprod{q_{i} \not \in \bb{T}}{} (1+\frac{1}{q_{i}}+\frac{1}{q_{i}^{2}})
\end{equation}

\noindent Apply (\ref{CohenIneq}) to the components of $q_{i} \not \in \bb{T}$ and take the log,
\begin{equation} \label{2bSubbed}
\ln{2} > \ln{(1+\frac{1}{p})} + \bsum{q_{i} \in \bb{T}}{} \ln{(1+\frac{1}{q_{i}}+\frac{1}{q_{i}^{2}}+ \ldots + \frac{1}{q_{i}^{\gamma_{i}}})} + \bsum{q_{i} \not \in \bb{T}}{} \frac{1}{q_{i}}
\end{equation}

\noindent Let
\begin{equation} \nonumber
\Delta = \bsum{q_{i} \in \bb{T}}{} \ln{(1+\frac{1}{q_{i}}+\frac{1}{q_{i}^{2}}+ \ldots + \frac{1}{q_{i}^{\gamma_{i}}})} - \frac{1}{q_{i}}
\end{equation}

\noindent We substitute $\Delta$ into (\ref{2bSubbed}), add $\frac{1}{p}$ to both sides, and rearrange,
\begin{equation} \label{firstBound}
\ln{2} - \Delta - \ln{(1+\frac{1}{p})} + \frac{1}{p} > \frac{1}{p} + \bsum{i=1}{k} \frac{1}{q_{i}}
\end{equation}

We now observe the right hand side of the inequality is the sum of the reciprocal of the primes that divide $N$. Because $\Delta$ is a straight forward calculation, it would seem our upper bound is singularly dependent upon $p$, however, we can still do a little better by adding a few more primes to \bb{T}. Remark (\ref{specPrime}) implies $p = 37, 61, 73$, or $p \geq 109$ are the only options for the special prime. It will be convenient for us to refer to $\gamma_{q}$ for the exponent of a particular prime $q \in \bb{T}$ as we consider each case in turn.
\begin{enumerate}
\item $p=37$, then $19|(p+1)$ (and thus $N$). Additionally, we know 61 is not the special prime and since we have seen two chains (with appropriate assumptions) that demonstrate $61|N$ we can put $19, 61 \in \bb{T}$ with $\gamma_{19} = \gamma_{61} = 4$.

\item $p=61$, then $31|(p+1)$, which is already in \bb{T}.

\item $p=73$, then $37|(p+1)$, and again, we can put $37, 61 \in \bb{T}$ with $\gamma_{37} = \gamma_{61} = 4$.
\item $p \geq 109$. As before, we can put 61 $\in$ \bb{T} with $\gamma_{61} = 4$.
\end{enumerate}
When $p \geq 109$ we must change our bound slightly. By virtue of the Maclaurin series of $\ln{(1+x)}$, $\ln{(1+x)} > x-\frac{x^{2}}{2}$ for small $x$. Now, (\ref{firstBound}) may be written as
\begin{equation} \nonumber
\ln{2} - \Delta + \frac{1}{2p^{2}} > \frac{1}{p} + \bsum{i=1}{k} \frac{1}{q_{i}}
\end{equation}
After computing the different incarnations of $\Delta$ for each case, we get the following upper bounds for the right hand side of (\ref{firstBound}):

\begin{eqnarray}
& Upper Bound \nonumber \\
p = 37 & .6633150 \nonumber \\
p = 61 & .6646602 \nonumber \\
p = 73 & .6644488  \nonumber \\
p \geq 109 & .6644335 \nonumber
\end{eqnarray}
As can be seen, .6646602 is the greatest of these bounds, so we take this to be, and will refer to it as, \emph{the} upper bound for our $N$.

This may seem like a lot of work to lower the previous upper bound by less than .013. However, when one considers \emph{optimal} solutions, by which we mean, every allowable prime in \bb{T} without concern for special primes. It turns out that \bb{T} must contain 100,369 primes; 5 and every $1 \pmod{5}$ prime less than 5,826,451. This improved bound requires 5 and every $1 \pmod{5}$ prime less than 2,647,111 or a mere 48,250 primes. Of course, the primes that appear in $\sigma$-chains is far from optimal and about 20\% of them are special primes, which as we have seen, require extra attention.

\section{Programming Methodology}

It should be noted that $\sigma$-chains as a term is a bit misleading. A more appropriate term would be $\sigma$-trees. Of course, presenting such information in tree form becomes unwieldly. Though as chains get progressively longer, they too become unwieldly. Considering we had to construct chains to include over 960,000 primes, even after taking great pains to reduce the upper bound in Section \ref{upperBound}, the approximately 10,000 pages of data is likely why Theorem \ref{main} has not been demonstrated sooner.

In an effort to simplify the process of constructing chains, we started with chains of length 2:
\begin{center} \{\emph{Known Seed Prime}\} $\to$ \{\emph{New Prime}\} \end{center}
Chains resulting in a potential special prime or a composite too large for Mathematica to conveniently factor were set aside\footnote{The chains involving large composites were never used. We were able to generate enough primes to prove Theorem \ref{main} without factoring any of these numbers.}. The new primes were put into a ``seed'' list. The next ordered pair, or 2-chain, was created using the smallest number from the seed list. This is not a new idea. The advantage is always working with the smallest numbers from the full $\sigma$-tree, thus avoiding the need to factor large numbers to continue a chain. More importantly, keeping the data as uniform 2-chains made programming chain manipulations much easier to automate and separate between two computers when convenient.

Breaking the upper bound without including candidates for the special prime, though technically feasible, would probably take years. The 2-chains with special primes, previously set aside, were extended to 3-chains where the 3rd element was also a special prime, as follows
\begin{center} \{\emph{Seed Prime}\} $\to$ \{\emph{Special Prime}\} $\to$ \{\emph{Special Prime}\} \end{center}

This collection of 3-chains was then sorted by the last element. The first two appearances of a special prime in the 3rd element were paired together and used to confirm that prime was an element of \bb{T}. After removing chains with duplicate terminus (keeping one chain from the confirmed primes), the 3-chains were extended to 4-chains, again, only extending in the cases where a special prime appeared as the 4th element (that had not already been confirmed in the previous step). The last element of each 4-chain was then compared to the 3rd and 4th element of each of the other 4-chains. Any matches were used to confirm the corresponding special prime was in \bb{T}. This process is not optimal. Among other considerations, we are allowed to have non-special primes appear in these extended chains, however, a broader approach turned out to be unnecessary.

The calculations needed to create the primes included in our set \bb{T}, required 2 laptops a little over 3 weeks to perform. The previous month had been spent trying to work with chains of arbitrary length. As aforementioned this becomes quite cumbersome. Once this 2-chain method was implemented, the previous month's work (in some sense) was replicated in about 4 days.

\section{Proof Methodology}

As was previously mentioned, the typical method of proof for theorems like Theorem \ref{premain} and \ref{main} is to accumulate enough primes to show $\sigma(N) > 2N$. In the context of \emph{optimal} solutions, this bound can be easier to surpass than the sum of the reciprocal of primes. Taking every allowable prime to be in \bb{T} without concern for special primes, and assuming each prime divides $N$ four times, we achieve $\sigma(N) > 2N$ after 47335 primes; 5 and every $1 \pmod{5}$ prime less than 2592521. A marginal improvement compared to our methodology. Though without any special knowledge regarding the number of times an arbitrary prime divides $N$, the sum of the reciprocal of primes may be an easier bound to surpass. Regardless, when nearly a million primes are needed for either method of proof, the choice largely becomes a matter of taste.

With regards to section \ref{ewellForm}. If one compares the improvement to the upper bound from increasing the number of times a prime divides $N$ from 4 to 14 (or 24), the improvements are minor. This begs the question, ``Why use this in our proof?'' The answer is simple. Since we are showcasing a different methodology for these types of proofs, we hope something here may spur innovation in someone else by broadening the view of the problem.

\section{Data Summary}

Occasionally, Mathematica would hang trying to show a number to be provably prime. This is why Module 17 has about half as much data as the other modules. The first time this happened, we started the next module, but after this instance, we skipped the seed prime, and restarted the module with the next number in the seed list.

Despite setting the recursion level to infinity, Mathematica did not seem to like using our algorithm after about 24,000 new seeds cycled through. To remedy this, we stopped after 20,000 iterations and called it a \emph{data module}. We have 30 data modules for the non-special primes using this algorithm. A proper remedy, namely taking the local variables within the program and making them global, solved this problem and allowed us to use as many as 50,000 seeds without issue in Modules 32-34.

Due to a lapse in programming, the primes that were $1 \pmod{12}$ and also $6 \pmod{7}$ were originally collected with the special primes. After the oversight was noticed, we separated them out and called them Module 31. The new primes were used as seeds for chains in Modules 32-34.

The filters resulting from Modules 1-18 were used to filter Module 20. As a result, 6305 primes were duplicated between Modules 19 and 20. The two data sets were combined for simplicity of filtering out duplicates.
\vspace{1 pc}

\begin{tabular}{|c|c|c|c|c|}
\multicolumn{5}{c}{Data Summary} \\\hline
Module & \# Primes & Total Primes & Module Sum & Total Sum \\\hline
1 & 29253   &   29253   & 0.5430147 &   0.5430147 \\\hline
2 & 27967   &   57220   & 0.0105602 &   0.5535749 \\\hline
3 & 27451   &   84671   & 0.0060902 &   0.5596651 \\\hline
4 & 27040   &  111711   & 0.0040347 &   0.5636998 \\\hline
5 & 27071   &  138782   & 0.0032882 &   0.5669880 \\\hline
6 & 26680   &   165462  & 0.0025726 &   0.5695606 \\\hline
7 & 26386   &   191848  & 0.0020041 &   0.5715647 \\\hline
8 & 26154   &   218002  & 0.0017367 &   0.5733014 \\\hline
9 & 26386   &   244388  & 0.0016088 &   0.5749102 \\\hline
10 & 26008  &   270396  & 0.0014174 &   0.5763276 \\\hline
11 & 25915  &   296311  & 0.0012916 &   0.5776192 \\\hline
12 & 25933  &   322244  & 0.0012246 &   0.5788438 \\\hline
13 & 25837  &   348081  & 0.0010565 &   0.5799003 \\\hline
14 & 25562  &   373643  & 0.0009254 &   0.5808257 \\\hline
15 & 25825  &   399468  & 0.0008905 &   0.5817163 \\\hline
16 & 25543  &   425011  & 0.0008013 &   0.5825176 \\\hline
17 & 12262  &   437273  & 0.0003992 &   0.5829168 \\\hline
18 & 25497  &   462770  & 0.0007536 &   0.5836704 \\\hline
19-20 & 44626 & 507396  & 0.0011789 &   0.5848493 \\\hline
21 & 25447  &   532843  & 0.0006398 &   0.5854891 \\\hline
22 & 25055  &   557898  & 0.0005856 &   0.5860747 \\\hline
23 & 25086  &   582984  & 0.0005580 &   0.5866328 \\\hline
24 & 25278  &   608262  & 0.0005657 &   0.5871985 \\\hline
25 & 25292  &   633554  & 0.0005443 &   0.5877428 \\\hline
26 & 25100  &   658654  & 0.0004953 &   0.5882381 \\\hline
27 & 25094  &   683748  & 0.0004879 &   0.5887260 \\\hline
28 & 25150  &   708898  & 0.0004763 &   0.5892022 \\\hline
29 & 25034  &   733932  & 0.0004517 &   0.5896539 \\\hline
30 & 25069  &   759001  & 0.0004470 &   0.5901010 \\\hline
31 & 41963  &   800964  & 0.0160788 &   0.6061797 \\\hline
32 & 36937  &   837901  & 0.0005961 &   0.6067758 \\\hline
33 & 54933  &   892834  & 0.0008298 &   0.6076056 \\\hline
34 & 66071  &   958905  & 0.0009650 &   0.6085706 \\\hline
Special & 2511 & 961416 & 0.0567245 &   0.6652951 \\
Primes  &      &        &           &             \\\hline
\end{tabular}

\bibliography{references}{}
\bibliographystyle{plain}

\end{document}